\renewcommand{\subset}{\subseteq}
\newcommand{\dx}{\,\text{\rmfamily{}\upshape{}d}x}
\def\N{\mathbb  N}
\def\R{\mathbb  R}
\def\Z{\mathbb  Z}
\def\argmin{\operatorname*{arg\, min}}
\def\dtol{\delta_{\mathrm{tol}}}
\title{A numerical solution approach for non-smooth optimal control problems
based on the Pontryagin maximum principle}
\author{Daniel Wachsmuth\authormark{1}}
\affil[1]{\url{daniel.wachsmuth@uni-wuerzburg.de}\\ University of W\"urzburg}
\begin{document}

\maketitle

\begin{abstract}
We consider nonsmooth optimal control problems
subject to a linear elliptic partial differential equation
with homogeneous Dirichlet boundary conditions.
It is well-known that local solutions satisfy the celebrated Pontryagin maximum principle.
In this note, we will investigate an optimization method that is based on the maximum principle.
We prove that the discrepancy in the maximum principle vanishes along the resulting sequence of iterates.
Numerical  experiments confirm the theoretical findings.
\end{abstract}

\section{Introduction}

In this note, we consider the following optimal control problem:
Minimize
\begin{equation}\label{eq_P}
 J(y,u) := \frac12\|y-y_d\|_{L^2(\Omega)}^2 + \int_\Omega g(u(x)) \dx
\end{equation}
over all $u\in L^2(\Omega)$ and $y\in H^1_0(\Omega)$ satisfying
\[\begin{aligned}
 -\Delta y &= u &&\text{ in } \Omega,\\
 y &= 0 &&\text{ on } \partial\Omega.
\end{aligned}\]
Here,  $\Omega\subset \R^d$ is a bounded domain, and $g: \R \to \bar \R = \R \cup \{+\infty\}$ is assumed to be proper and lower semicontinuous.
In addition, we require
\begin{equation}\label{eq_ass_g}
 \lim_{ |v| \to \infty} \frac{ g(v) }{|v|} = +\infty.
\end{equation}
Note, that we assume neither continuity nor convexity of $g$.
Hence, it is impossible to prove existence of solutions of \eqref{eq_P}.
In fact, one can construct problems without solution, see \cite[Section 4.5]{DWachsmuth2019}.
In this note, we will work with the example
\begin{equation}\label{eq_def_g}
 g(u):= \frac\alpha2 u^2 + I_{\mathbb Z} (u)
 = \begin{cases} \frac\alpha2 u^2 & \text{ if } u \in \mathbb Z\\
                              +\infty & \text{ otherwise,}
                             \end{cases}
\end{equation}
where $\alpha>0$.
If $g$ is assumed to be convex and continuous, then existence of  solutions of \eqref{eq_P} can be proven by the direct method of the calculus of variations \cite{Troltzsch2010}.
Let us remark that by the above assumptions $g$ is bounded from below.

If solutions exist, then the Pontryagin maximum principle \cite{PontryaginBoltyanskiiGamkrelidzeMishchenko1962} is a necessary optimality condition.
Its main feature is that no differentiability with respect to the controls is needed, and so it is perfectly suited for the problems considered here.
In fact, due to the structure of the problem (linear state equation, convexity of $J$ with respect to $y$), the maximum principle is sufficient.
We refer to \cite{BonnansCasas1995,Casas1994,CasasRaymondZidani2000,RaymondZidani1998} for the Pontryagin maximum principle applied to optimal control
problems for partial differential equations.
The goal of this note is to construct an algorithm to solve the maximum principle.
We will comment on related work in Section \ref{sec_algorithm}.

\section{Sensitivity analysis}

In this section, we will perform a sensitivity analysis with respect to perturbations of the control with characteristic functions.
The setup is as follows: Let
$u, \tilde u\in L^2(\Omega)$
be feasible controls, i.e., the integrals $ \int_\Omega g(u) \dx$ and $ \int_\Omega g(\tilde u) \dx $ exist.
Let $B \subset \Omega$ be measurable.
We define
\[
u_B := u + \chi_B (\tilde u-u).
\]
Let $y,y_B$ be the uniquely determined weak solutions of
 \[\begin{aligned}
 -\Delta y &= u &  -\Delta y_B &= u_B &&\text{ in } \Omega,\\
 y &= 0         &  y_B &= 0 &&\text{ on } \partial\Omega.
\end{aligned}\]
Let $p\in H^1_0(\Omega)$ be the weak solution of the adjoint equation
\[\begin{aligned}
  -\Delta p &= y-y_d &&\text{ in } \Omega,\\
  p &= 0 &&\text{ on } \partial\Omega.
\end{aligned}\]
The goal is now to estimate $ J(y_B,u_B) - J(y,u)$ in terms of $ u, \tilde u,p$ and the Lebesgue measure $|B|$ of $B$.
Here, we have the following result.

\begin{lemma}\label{lem_21}
 Under the assumptions above, we have
 \[
 J(y_B,u_B) - J(y,u) =   \int_B  (\tilde u - u)p + g(\tilde u) - g(u)\dx + \frac12 \|y_B-y\|_{L^2(\Omega)}^2 .
 \]
\end{lemma}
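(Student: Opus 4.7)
The plan is to split $J(y_B,u_B) - J(y,u)$ into its two natural pieces, the tracking term and the $g$-integral, and handle each separately. For the $g$-integral I would simply use that $u_B = u$ on $\Omega\setminus B$ and $u_B = \tilde u$ on $B$, so that
\[
\int_\Omega g(u_B) - g(u)\dx = \int_B g(\tilde u) - g(u)\dx,
\]
which already accounts for the second term in the claimed identity.

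For the tracking term, I would use the algebraic identity
\[
\tfrac12 \|y_B - y_d\|_{L^2(\Omega)}^2 - \tfrac12 \|y - y_d\|_{L^2(\Omega)}^2
= (y - y_d, y_B - y)_{L^2(\Omega)} + \tfrac12 \|y_B - y\|_{L^2(\Omega)}^2,
\]
so that the quadratic remainder already matches the last term in the claimed identity. The remaining linear term $(y-y_d, y_B-y)_{L^2(\Omega)}$ is where the adjoint state enters.

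The key step is to rewrite this linear term using $p$. Setting $z := y_B - y \in H^1_0(\Omega)$, linearity of the state equation gives $-\Delta z = u_B - u = \chi_B(\tilde u - u)$ with zero boundary data. Testing the adjoint equation $-\Delta p = y - y_d$ with $z$ and the state difference with $p$, and using Green's formula together with the homogeneous Dirichlet conditions, I get
\[
(y - y_d, z)_{L^2(\Omega)}
= (\nabla p, \nabla z)_{L^2(\Omega)}
= (p, -\Delta z)_{L^2(\Omega)}
= \int_\Omega p\,\chi_B(\tilde u - u)\dx
= \int_B p(\tilde u - u)\dx,
\]
which produces the first term in the identity. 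Adding the three contributions yields the claim.

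I do not expect any real obstacle here: the proof is essentially just two integrations by parts plus the quadratic expansion. The only point to be slightly careful about is that $u,\tilde u,u_B \in L^2(\Omega)$ so that all states and the adjoint live in $H^1_0(\Omega)$, which makes the use of Green's formula rigorous, and that the $g$-integrals exist in $\R\cup\{+\infty\}$ by feasibility of $u$ and $\tilde u$ together with $g$ being bounded from below.
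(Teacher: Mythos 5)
Your proposal is correct and follows essentially the same route as the paper: quadratic expansion of the tracking term, the observation that $u_B$ differs from $u$ only on $B$, and the adjoint identity $(y-y_d,\,y_B-y)_{L^2(\Omega)} = \int_B p(\tilde u - u)\dx$ obtained by testing the two weak formulations against each other. The paper merely compresses the integration-by-parts step into ``the definition of $p$,'' which you spell out explicitly.
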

\begin{proof}
This follows directly from the definition of $p$ and $u_B$:
\[\begin{aligned}
 J(y_B,u_B) - J(y,u) & = \frac12\|y_B-y_d\|_{L^2(\Omega)}^2 + \int_\Omega g(u_B) \dx -\frac12\|y-y_d\|_{L^2(\Omega)}^2 - \int_\Omega g(u) \dx\\
 & = \int_\Omega (y_B - y)(y-y_d) + \frac12 (y_B-y)^2 \dx + \int_B g(\tilde u) - g(u) \dx \\
 & = \int_B  (\tilde u - u)p + g(\tilde u) - g(u)\dx + \frac12 \|y_B-y\|_{L^2(\Omega)}^2.
  \end{aligned}
\]
\end{proof}

We will now prove that $\|y_B-y\|_{L^2(\Omega)}^2$ is of higher order with respect to the Lebesgue measure $|B|$ of $B$.

\begin{lemma}
There are constants $c>0$ and $\nu>1/2$ independent of $u,\tilde u,B$ such that
 \[
  \|y_B-y\|_{L^2(\Omega)} \le c \, |B|^{\nu} \cdot  \| \tilde u-u\|_{L^\infty(\Omega)},
 \]
where $|B|$ denotes the Lebesgue measure of $B$. The constant $\nu$ can be chosen as
\[
 \nu = \begin{cases} 1 & \text{ if } d\le 3, \\
       1-\epsilon & \text{ if } d=4 \text{ for }\epsilon>0, \\
        \frac12+\frac2d & \text{ if } d>4.
        \end{cases}
        \]
\end{lemma}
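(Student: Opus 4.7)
The plan is to write $w := y_B - y$, observe that $w \in H^1_0(\Omega)$ solves the Poisson problem
\[
-\Delta w = \chi_B (\tilde u - u) \text{ in } \Omega, \qquad w = 0 \text{ on } \partial \Omega,
\]
and then estimate $\|w\|_{L^2}$ by duality. Setting $f := \chi_B(\tilde u - u)$ so that $|f| \le \|\tilde u - u\|_{L^\infty} \chi_B$, I would write
\[
\|w\|_{L^2(\Omega)} = \sup_{\|v\|_{L^2(\Omega)} = 1} \int_\Omega w v \dx,
\]
and for each test function $v$ introduce the auxiliary dual state $\phi \in H^1_0(\Omega)$ with $-\Delta \phi = v$. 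Two integrations by parts then yield $\int_\Omega wv \dx = \int_B (\tilde u - u) \phi \dx$, so that
\[
\int_\Omega w v \dx \le \|\tilde u - u\|_{L^\infty(\Omega)} \int_B |\phi| \dx \le \|\tilde u - u\|_{L^\infty(\Omega)} |B|^{1-1/p} \|\phi\|_{L^p(\Omega)}
\]
for any $p \in [1,\infty]$ by Hölder's inequality.

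The key ingredient is now elliptic $H^2$-regularity $\|\phi\|_{H^2(\Omega)} \le c\|v\|_{L^2(\Omega)}$ (assuming $\Omega$ to be sufficiently regular, e.g.\ convex or $C^{1,1}$), combined with Sobolev embedding of $H^2(\Omega)$ into an appropriate $L^p(\Omega)$. For $d \le 3$ one has $H^2(\Omega) \hookrightarrow L^\infty(\Omega)$, hence one may take $p = \infty$, which gives the exponent $\nu = 1$. For $d = 4$ the embedding $H^2(\Omega) \hookrightarrow L^p(\Omega)$ holds for every finite $p$, so one picks $p$ large and sets $\epsilon = 1/p$, giving $\nu = 1 - \epsilon$. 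For $d > 4$ one uses the sharp embedding $H^2(\Omega) \hookrightarrow L^{2d/(d-4)}(\Omega)$ and computes $1 - 1/p = 1 - (d-4)/(2d) = 1/2 + 2/d$, matching the claimed exponent.

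Combining the steps, $\int_\Omega wv \dx \le c \|\tilde u - u\|_{L^\infty(\Omega)} |B|^\nu \|v\|_{L^2(\Omega)}$, and taking the supremum over $v$ yields the bound on $\|w\|_{L^2(\Omega)}$. The only real obstacle is bookkeeping the Sobolev exponents in each dimension and noting that the $\epsilon$-loss in $d = 4$ is unavoidable because $H^2$ just fails to embed into $L^\infty$ in that critical dimension; the factor $\nu > 1/2$ is essential later for absorbing $\|y_B - y\|_{L^2}^2 = O(|B|^{2\nu})$ into first-order terms in Lemma~\ref{lem_21}, and this is exactly what the three cases guarantee.
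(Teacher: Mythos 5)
Your proof is correct and rests on the same underlying idea as the paper's — a duality argument that trades the smallness of the set $B$ against the integrability of a dual state — but the regularity input differs in a way worth noting. The paper, for $d\le 3$, solves $-\Delta q = z$ and invokes Stampacchia's estimate $\|q\|_{L^\infty(\Omega)} \le c\|z\|_{L^2(\Omega)}$ (valid whenever $2 > d/2$, on an arbitrary bounded domain), obtaining $\|z\|_{L^2(\Omega)} \le c\|w\|_{L^1(\Omega)}$ and hence $\nu=1$ directly from the $L^1$-norm of $\chi_B(\tilde u - u)$; for $d>3$ it cites the $L^1$-theory of Br\'ezis--Strauss. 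You instead use $H^2$-regularity of the dual state plus the Sobolev embeddings $H^2\hookrightarrow L^p$, and your exponent bookkeeping is correct in all three regimes ($1-(d-4)/(2d)=\tfrac12+\tfrac2d$ checks out, and $\nu>\tfrac12$ holds for every $d$). The price of your route is the additional hypothesis that $\Omega$ be convex or $C^{1,1}$, which you correctly flag but which the paper does not assume — it only requires $\Omega$ bounded, and Stampacchia's theorem needs no boundary regularity. What your approach buys in exchange is uniformity: a single mechanism (one elliptic regularity estimate plus one scale of embeddings) covers all dimensions with explicit exponents, rather than switching to a different reference for $d>3$. Your closing remark about why $\nu>\tfrac12$ matters downstream is also accurate: it is precisely what makes $\|y_B-y\|_{L^2(\Omega)}^2 = O(|B|^{2\nu}) = o(|B|)$ in Theorem~\ref{thm_sensitivity}.
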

\begin{proof}
We prove the claim by a well-known duality argument.
Assume $d\le3$.
Let $w\in L^2(\Omega)$ be given.
Let $z,q  \in H^1_0(\Omega)$ be the weak solutions of
 \[\begin{aligned}
 -\Delta z &= w &-\Delta q &= z &&\text{ in } \Omega,\\
 z &= 0 &q &= 0 &&\text{ on } \partial\Omega.
\end{aligned}\]
Due to \cite{Stampacchia1965}, there is  $c>0$ independent of $w,z$ such that
\[
 \|z\|_{L^\infty(\Omega)} \le c \|w\|_{L^2(\Omega)}.
\]
Testing the weak formulations with $z$ and $q$ yields
\[
 \|z\|_{L^2(\Omega)}^2  = \int_\Omega wq\dx \le \|w\|_{L^1(\Omega)}\|q\|_{L^\infty(\Omega)} \le c \|w\|_{L^1(\Omega)}\|z\|_{L^2(\Omega)}.
\]
This proves $\|z\|_{L^2(\Omega)} \le c \|w\|_{L^1(\Omega)}$. Applying this estimate to $z:= y_B - y$ and $w:= u_B-u$ yields the claim with
\[
\|y_B-y\|_{L^2(\Omega)} \le c \| u_B-u\|_{L^1(\Omega)} \le c \, |B|  \cdot \| \tilde u-u\|_{L^\infty(\Omega)} .
\]
In case $d>3$ one can use the estimates from \cite[Theorem 18]{BrezisStrauss1973}.
\end{proof}

Combining these results proves the following theorem.

\begin{theorem}\label{thm_sensitivity}
Let $u, \tilde u\in L^\infty(\Omega)$. Let $B \subset \Omega$ be measurable.
Let $\tilde u, y_B,y,p$ be defined as above. Then there are $\gamma >0$ and $c>0$ independent of $u,\tilde u,B$ such that
\[
 J(y_B,u_B) - J(y,u) \le   \int_B  (\tilde u - u)p + g(\tilde u) - g(u)\dx + c\, |B|^{1+\gamma}\| \tilde u-u\|_{L^\infty(\Omega)}^2.
\]
\end{theorem}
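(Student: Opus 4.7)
The proof is essentially a direct combination of the two preceding results, so I expect no genuine obstacle: the work has already been done.

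The plan is to start from the exact identity of Lemma \ref{lem_21},
\[
 J(y_B,u_B) - J(y,u) =   \int_B  (\tilde u - u)p + g(\tilde u) - g(u)\dx + \frac12 \|y_B-y\|_{L^2(\Omega)}^2,
\]
and bound the final quadratic term by the previous lemma. Squaring the estimate
\[
 \|y_B-y\|_{L^2(\Omega)} \le c \, |B|^{\nu} \cdot  \| \tilde u-u\|_{L^\infty(\Omega)}
\]
yields $\tfrac12\|y_B-y\|_{L^2(\Omega)}^2 \le \tfrac{c^2}{2}\,|B|^{2\nu}\|\tilde u-u\|_{L^\infty(\Omega)}^2$. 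Since $\nu>1/2$ in all three dimensional cases listed, setting $\gamma := 2\nu-1 > 0$ and absorbing the constant $c^2/2$ into a new $c$ gives exactly the claimed inequality. Explicitly, one can take $\gamma = 1$ for $d\le 3$, $\gamma = 1-2\epsilon$ for $d=4$, and $\gamma = 4/d$ for $d>4$.

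The only thing to double-check is that the previous lemma actually applies under the hypotheses of the theorem: it requires $u,\tilde u \in L^\infty(\Omega)$ (so that $\|\tilde u - u\|_{L^\infty(\Omega)}$ is finite), which is exactly what is assumed here, and feasibility in the sense of Lemma \ref{lem_21} (finiteness of $\int_\Omega g(u)\dx$ and $\int_\Omega g(\tilde u)\dx$), which is implicit in writing $J(y_B,u_B)-J(y,u)$. No further regularity or sign conditions intervene, and the dependence of the constants $c,\gamma$ on $u,\tilde u,B$ is precisely what is needed for the statement.
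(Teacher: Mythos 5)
Your proof is correct and is exactly the argument the paper intends: it states only ``Combining these results proves the following theorem,'' meaning the identity of Lemma \ref{lem_21} plus the squared $L^2$-estimate with $\gamma=2\nu-1>0$. Your explicit values of $\gamma$ for each dimension are also consistent with the stated values of $\nu$.
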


\section{Pontryagin maximum principle}

With the help of  Theorem \ref{thm_sensitivity} we can prove the Pontryagin maximum principle.

\begin{theorem}\label{thm_pmp}
 Let $\bar u\in L^\infty(\Omega)$ be locally optimal with respect to $L^1(\Omega)$ topology for the control problem \eqref{eq_P}. Let $\bar y,\bar p \in H^1_0(\Omega)$ be the optimal state and adjoint solving
 \[\begin{aligned}
   -\Delta \bar y &= \bar u & -\Delta \bar p &= \bar y-y_d &&\text{ in } \Omega,\\
  \bar y &= 0 &\bar p &= 0 &&\text{ on } \partial\Omega.
 \end{aligned}\]
 Let $v\in \mathbb \R$ be such that $g(v)<+\infty$. Then
 \begin{equation}\label{eq_pmp}
  \bar u(x) \bar p(x) + g(\bar u(x)) \le v  \bar p(x) + g(v) \text{ for almost all }x \in \Omega.
 \end{equation}
\end{theorem}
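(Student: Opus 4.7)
The plan is to prove Theorem \ref{thm_pmp} by a contradiction (needle-variation) argument that feeds Theorem \ref{thm_sensitivity} with a carefully chosen perturbation set $B$. Fix $v\in\R$ with $g(v)<+\infty$. Since $v$ is a constant, it lies in $L^\infty(\Omega)$, and the sensitivity machinery of Section 2 applies with $\tilde u\equiv v$.

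Assume, toward a contradiction, that the exceptional set
\[
 E := \{x\in\Omega : \bar u(x)\bar p(x) + g(\bar u(x)) > v\,\bar p(x) + g(v)\}
\]
has positive Lebesgue measure. Because $\bar u\in L^\infty(\Omega)$, $\bar p\in H^1_0(\Omega)\subset L^1(\Omega)$, and $g(\bar u)\in L^1(\Omega)$ by feasibility, the defining function is measurable, so $E$ is measurable. Writing $E$ as a countable union of the sublevel sets
\[
 E_k := \Bigl\{x\in E : v\bar p(x)+g(v) - \bar u(x)\bar p(x) - g(\bar u(x)) < -\tfrac{1}{k}\Bigr\},
\]
some $E_k$ must have positive measure. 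Fix such $k$, set $\eta:=1/k$, and restrict attention to measurable $B\subseteq E_k$.

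Next I would apply Theorem \ref{thm_sensitivity} with $u=\bar u$, $\tilde u\equiv v$, $p=\bar p$. By construction of $E_k$, the first-order term satisfies
\[
 \int_B (v-\bar u)\bar p + g(v) - g(\bar u)\dx \le -\eta\,|B|,
\]
so with $y_B$ the state corresponding to $u_B=\bar u+\chi_B(v-\bar u)$,
\[
 J(y_B,u_B) - J(\bar y,\bar u) \le -\eta\,|B| + c\,|B|^{1+\gamma}\|v-\bar u\|_{L^\infty(\Omega)}^2.
\]
Since $\gamma>0$, for $|B|$ small enough the right-hand side becomes strictly negative. Simultaneously,
\[
 \|u_B-\bar u\|_{L^1(\Omega)} \le |B|\cdot\|v-\bar u\|_{L^\infty(\Omega)},
\]
so shrinking $|B|$ further places $u_B$ inside any prescribed $L^1$-neighborhood of $\bar u$, contradicting local $L^1$-optimality. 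Producing $B$ is possible because $E_k$ has positive (possibly infinite) measure: I would intersect with balls of growing radius and then take a subset of prescribed small measure, using the inner regularity of Lebesgue measure.

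The only step that is not purely mechanical is the passage from "positive measure" to "a positive-measure set on which the first-order gain is uniformly bounded below", which requires the sublevel-set decomposition above; this is the bookkeeping needed to make the higher-order term $c|B|^{1+\gamma}\|v-\bar u\|_{L^\infty}^2$ beaten by the linear term $-\eta|B|$. Once $\eta$ is fixed, Theorem \ref{thm_sensitivity} supplies the contradiction directly, and no further regularity of $g$ is needed.
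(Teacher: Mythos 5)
Your argument is correct, and it reaches the conclusion by a different localization step than the paper. Both proofs run on the same engine: apply Theorem \ref{thm_sensitivity} with $u=\bar u$ and the constant competitor $\tilde u\equiv v$, so that local $L^1$-optimality forces $0\le \int_B (v-\bar u)\bar p + g(v)-g(\bar u)\dx + o(|B|)$ for all small perturbation sets $B$. The paper then passes from this family of integral inequalities to the pointwise inequality \eqref{eq_pmp} via the Lebesgue differentiation theorem (shrinking balls around a.e.\ Lebesgue point of the integrand, dividing by $|B|$, and letting $|B|\to0$), citing \cite[Theorem 2.1]{NatemeyerWachsmuth2021}. You instead argue by contradiction: decompose the exceptional set $E$ into the sublevel sets $E_k$ on which the first-order integrand is uniformly below $-1/k$, pick one of positive measure, and take $B\subseteq E_k$ with $|B|$ small but positive so that the linear gain $-\eta|B|$ beats the remainder $c|B|^{1+\gamma}\|v-\bar u\|_{L^\infty}^2$ while $\|u_B-\bar u\|_{L^1}\le |B|\,\|v-\bar u\|_{L^\infty}$ keeps $u_B$ in the prescribed neighborhood. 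Your route is slightly more elementary (it needs only countable subadditivity and the non-atomicity of Lebesgue measure to extract $B$, not the differentiation theorem), and the quantitative remainder $c|B|^{1+\gamma}$ from Theorem \ref{thm_sensitivity} — uniform in $B$ — is exactly what makes the sublevel-set bookkeeping close; the paper's route is more direct when one already has the Lebesgue-point machinery at hand and yields the inequality at every Lebesgue point rather than merely almost everywhere by contradiction. The measurability of $E$ (via lower semicontinuity, hence Borel measurability, of $g$) and the feasibility of $u_B$ are the only housekeeping points, and you handle both; the remark that $E_k$ could have infinite measure is vacuous since $\Omega$ is bounded, but harmless.
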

\begin{proof}
 Let $v\in \mathbb \R$ be such that $g(v)<+\infty$. Applying Theorem \ref{thm_sensitivity} with $u:=\bar u$, $\tilde u:=v$ yields
 \[
  0 \le J(y_B,u_B) - J(\bar y, \bar u) =   \int_B  (v- \bar u)\bar p + g(\tilde u) - g(\bar u)\dx + o( |B|).
 \]
By standard arguments based on the Lebesgue differentiation theorem, see, e.g., \cite[Theorem 2.1]{NatemeyerWachsmuth2021}, the claim follows.
\end{proof}

The maximum principle is a sufficient condition for the problem considered here.

\begin{corollary}\label{cor_pmp_suff}
Let
$\bar u\in L^2(\Omega)$
satisfy the conclusion \eqref{eq_pmp} of Theorem \ref{thm_pmp}. Then $\bar u$ is global optimal for \eqref{eq_P}.
\end{corollary}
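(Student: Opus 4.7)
The plan is to combine the convexity of the tracking term with the pointwise PMP \eqref{eq_pmp}. Let $u \in L^2(\Omega)$ be an arbitrary feasible control with associated state $y$. I may assume $\int_\Omega g(u) \dx < +\infty$, since otherwise $J(y,u) = +\infty$ and the claim is trivial; in particular $g(u(x)) < +\infty$ for a.a.~$x\in\Omega$.

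First, convexity of $y \mapsto \tfrac12\|y - y_d\|_{L^2(\Omega)}^2$ yields
\[
\tfrac12\|y - y_d\|_{L^2(\Omega)}^2 - \tfrac12\|\bar y - y_d\|_{L^2(\Omega)}^2 \ge \int_\Omega (\bar y - y_d)(y - \bar y) \dx.
\]
Testing the adjoint equation $-\Delta \bar p = \bar y - y_d$ with $y - \bar y \in H^1_0(\Omega)$ and the identity $-\Delta(y-\bar y) = u - \bar u$ with $\bar p \in H^1_0(\Omega)$ shows that the right-hand side equals $\int_\Omega \bar p\,(u - \bar u)\dx$. Adding the $g$-terms therefore gives
\[
J(y, u) - J(\bar y, \bar u) \ge \int_\Omega \bigl[\bar p\,(u - \bar u) + g(u) - g(\bar u)\bigr] \dx.
\]

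Next, I would invoke \eqref{eq_pmp} with the pointwise choice $v = u(x)$, which produces
\[
(u(x) - \bar u(x))\bar p(x) + g(u(x)) - g(\bar u(x)) \ge 0 \quad \text{for a.a.~} x\in\Omega.
\]
This forces the integrand above to be nonnegative almost everywhere, and integration yields $J(y,u)\ge J(\bar y,\bar u)$, establishing the global optimality of $\bar u$.

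The main obstacle is the pointwise substitution $v = u(x)$ in \eqref{eq_pmp}: as stated, the maximum principle is only asserted for each fixed $v$ off a $v$-dependent null set. I would handle this by first approximating $u$ by simple functions $u_n$ with values in $\{v : g(v) < +\infty\}$, for which only a finite union of exceptional null sets is required, and then passing to the limit $n \to \infty$ using dominated convergence for the $\bar p$-contribution and lower semicontinuity of $g$ together with Fatou's lemma for the $g$-contribution. For the running example \eqref{eq_def_g}, where $\{v : g(v) < +\infty\} = \Z$ is countable, a single countable union of null sets already does the job and the substitution is immediate.
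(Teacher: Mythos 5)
Your main line is essentially the paper's proof: the paper invokes Lemma \ref{lem_21} with $B=\Omega$, which gives the exact identity
\[
J(\tilde y,\tilde u)-J(\bar y,\bar u)=\int_\Omega(\tilde u-\bar u)\bar p+g(\tilde u)-g(\bar u)\dx+\tfrac12\|\tilde y-\bar y\|_{L^2(\Omega)}^2,
\]
whereas you rederive the same quantity with the quadratic term dropped via convexity of the tracking term; that difference is immaterial. You also correctly notice a point the paper passes over in silence: \eqref{eq_pmp} holds for each fixed $v$ only up to a $v$-dependent null set, so substituting $v=u(x)$ for a general measurable $u$ needs justification.

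However, your proposed repair of that step is pointed the wrong way. Lower semicontinuity of $g$ together with Fatou gives $\liminf_n\int_\Omega g(u_n)\dx\ge\int_\Omega g(u)\dx$, but to pass from $\int_\Omega(u_n-\bar u)\bar p+g(u_n)-g(\bar u)\dx\ge 0$ to the same inequality for $u$ you need the \emph{opposite} bound $\limsup_n\int_\Omega g(u_n)\dx\le\int_\Omega g(u)\dx$: the approximants must not increase the cost. Since $g$ is merely lower semicontinuous on its domain, a simple-function approximation of $u$ alone need not satisfy $g(u_n(x))\to g(u(x))$ even when $u_n(x)\to u(x)$ within $\dom g$. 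The standard fix is to approximate the pair $(u,g\circ u)$: partition $\Omega$ into countably many measurable sets on which both $u$ and $g(u)$ oscillate by at most $1/n$ and assign on each piece a value attained there by $u$; then $|u_n-u|\le 1/n$ and $|g(u_n)-g(u)|\le 1/n$ pointwise, only countably many exceptional null sets arise, and the limit passage is immediate without any appeal to Fatou. (Alternatively, one can replace the family of all $v$ with $g(v)<\infty$ by a countable subfamily realizing $\inf_v\{v\,p+g(v)\}=-g^*(-p)$ for every $p\in\R$, which exists because $g^*$ is finite and continuous under \eqref{eq_ass_g}; the union of the corresponding null sets then handles all $u$ at once.) Your closing remark that for the example \eqref{eq_def_g} the domain $\Z$ is countable and the substitution is immediate is correct.
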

\begin{proof}
 Let
 $\tilde u\in L^2(\Omega)$
 be an admissible control with associated state $\tilde y$. Then Lemma \ref{lem_21} with $B=\Omega$ yields
 \[
  J(\tilde y, \tilde u) - J(\bar y, \bar u) =   \int_\Omega  (\tilde u - \bar u)\bar p + g(\tilde u) - g(\bar u)\dx + \frac12 \|\tilde y-\bar y\|_{L^2(\Omega)}^2.
 \]
 Since $\bar u$ satisfies \eqref{eq_pmp}, the first expression is non-negative, which implies $J(\tilde y, \tilde u) - J(\bar y, \bar u) \ge \frac12 \|\tilde y-\bar y\|_{L^2(\Omega)}^2 \ge0$.
\end{proof}

\section{Construction of an algorithm}
\label{sec_algorithm}

We will now apply Theorem \ref{thm_sensitivity} with $u := u_k$ and $\tilde u:=\tilde u_k$, where $u_k$ is the current iterate of the algorithm to be devised.
Let $y_k$ and $p_k$ be the associated state and adjoint.
The control $\tilde u_k$ has to be computed in each iteration. Let $B_k$ be measurable.
Then we have
\begin{equation}\label{eq_sens_uk}
  J(y_{B_k},u_{B_k}) - J(y_k,u_k) = \int_{B_k}  (\tilde u_k - u_k)p_k + g(\tilde u) - g(u_k)\dx  + o(|B_k|).
\end{equation}
The idea is now to choose $\tilde u_k$ and $B_k$ such that $ J(y_{B_k},u_{B_k}) - J(y_k,u_k) $ is negative and
to define the new iterate by
\[
 u_{k+1} = u_k + \chi_{B_k} ( \tilde u_k - u_k).
\]
In view of the maximum principle, Theorem \ref{thm_pmp},
it is natural to choose $\tilde u_k$ as a function satisfying
\begin{equation}\label{eq_def_tilde_uk}
 \tilde u_k(x) \in \argmin_{v \in \R} v p_k + g(v) .
\end{equation}
In addition, $B_k$ will be chosen to get sufficient descent.

Let us comment on related work. The classic algorithm of \cite{KrylovCernousprimeKo1962} chooses $B_k := \Omega$, resulting in a fixed-point scheme
to solve the maximum principle. The min-h method of \cite{Gottlieb1967} uses the update $u_{k+1} := u_k + t( \tilde u_k - u_k)$ with $t\in (0,1]$,
and is thus only suited for convex functions $g$.
In the monograph  \cite{Srochko2000}, a method similar to ours is presented to solve optimal control problems with ODEs.
Let us also also mention the review papers \cite{ChernousprimeKoLyubushin1982,Strekalovsky2014}.
In \cite{MannsHahnKirchesLeyfferSager2023} binary control problems are solved with a similar approach: there a trust-region globalization is proposed,
whereas we use an Armijo line-search to globalize.

As motivated above, we will compute $\tilde u_k$ as a result of the pointwise minimization
\[
 \tilde u_k(x) \in \argmin_{v \in \R} v p_k + g(v) .
\]
Due to \eqref{eq_ass_g} this problem is solvable for all $x$.
A measurable selection of this argmin-map exists \cite{AubinFrankowska1990}.
For the example of $g$ proposed in \eqref{eq_def_g}, we get
\[
 \tilde u_k(x) \in \operatorname{round} \left( -\frac1\alpha p_k(x) \right).
\]
It remains to describe how $B_k$ is chosen.
Here, we are faced with two competing goals:
In order to make the first term in \eqref{eq_sens_uk} as small as possible, $B_k$ has to be chosen as large as possible.
However, to control the remainder term in \eqref{eq_sens_uk}, $|B_k|$ has to be chosen sufficiently small.

We propose the following line-search. Given $t\in (0,1]$, choose $B_t$ such that
\begin{equation}\label{eq_cond_Bk}
 \begin{aligned}
   \int_{B_t} (\tilde u - u_k)p_k + g(\tilde u) - g(u_k)\dx &\le t  \int_\Omega (\tilde u - u_k)p_k + g(\tilde u) - g(u_k)\dx, \\
    |B_t| &\le t \cdot |\Omega|.
 \end{aligned}
\end{equation}
Due to the celebrated Lyapunov convexity theorem, see, e.g., \cite[Theorem 5.5]{Rudin1973}, a measurable set $B_t$ satisfying
\eqref{eq_cond_Bk} exists.
Given $t$ and $B_t$, we set $u_t := u_k + \chi_{B_t} ( \tilde u_k - u_k)$. Let $y_t$ be the associated state.

The parameter $t_k$ is determined by the following procedure: Let $t_k$ be the largest number in $\{ \beta^l : \ l \in \N \cup \{0\} \}$, where $\beta \in (0,1)$,
that satisfies the descent condition
\begin{equation}\label{eq_cond_tk}
 J(y_t, u_t) - J(y_k,u_k) \le \sigma  \int_{B_t} (\tilde u - u_k)p_k + g(\tilde u) - g(u_k)\dx
\end{equation}
where $\sigma\in(0,1)$, and $B_t$ is a measurable set satisfying \eqref{eq_cond_Bk}. This condition is inspired by the well-known Armijo line-search in nonlinear optimization.
If $u_k$ does not satisfy the maximum principle, there is an admissible step-size $t_k$, and the resulting algorithm produces a new iterate with smaller value of the objective.

\begin{lemma} \label{lem_line_search_welldefined}
Suppose that
\[
\int_\Omega (\tilde u - u_k)p_k + g(\tilde u) - g(u_k)\dx < 0 .
\]
 There is $t_0>0$ such that for all $t\in (0,t_0)$ condition \eqref{eq_cond_tk} is satisfied.
\end{lemma}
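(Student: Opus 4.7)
My plan is to combine Theorem \ref{thm_sensitivity} with the two properties in \eqref{eq_cond_Bk}. First I set
\[
D := \int_\Omega (\tilde u - u_k)p_k + g(\tilde u) - g(u_k)\dx,
\]
which by hypothesis is strictly negative. For $t\in(0,1]$ and a measurable set $B_t$ satisfying \eqref{eq_cond_Bk}, I abbreviate $I_t := \int_{B_t} (\tilde u - u_k)p_k + g(\tilde u) - g(u_k)\dx$; the two conditions in \eqref{eq_cond_Bk} then read precisely $I_t \le tD < 0$ and $|B_t| \le t|\Omega|$.

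Next I apply Theorem \ref{thm_sensitivity} with $u := u_k$ and $B := B_t$ (using the implicit assumption from the algorithm that $\tilde u \in L^\infty(\Omega)$), which yields
\[
J(y_t,u_t) - J(y_k,u_k) \le I_t + c\,|B_t|^{1+\gamma}\|\tilde u - u_k\|_{L^\infty(\Omega)}^2.
\]
The Armijo condition \eqref{eq_cond_tk} will then follow once I establish
\[
c\,|B_t|^{1+\gamma}\|\tilde u - u_k\|_{L^\infty(\Omega)}^2 \le (1-\sigma)|I_t|.
\]
Bounding the left-hand side from above by $c|\Omega|^{1+\gamma}\|\tilde u - u_k\|_{L^\infty(\Omega)}^2\, t^{1+\gamma}$ via $|B_t|\le t|\Omega|$, and the right-hand side from below by $(1-\sigma) t|D|$ via $I_t \le tD$, I divide through by $t$ and reduce the claim to
\[
c\,|\Omega|^{1+\gamma}\|\tilde u - u_k\|_{L^\infty(\Omega)}^2\, t^\gamma \le (1-\sigma)|D|,
\]
which, since $\gamma > 0$ and $|D|>0$, holds for every $t$ below an explicit threshold $t_0>0$.

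The main subtlety, I think, is recognizing that the two clauses of \eqref{eq_cond_Bk} play complementary roles: the integral clause forces $-I_t$ to have a linear-in-$t$ lower bound, while the measure clause forces the quadratic remainder in Theorem \ref{thm_sensitivity} to be of order $t^{1+\gamma}$, strictly higher than linear. Once these two functions are separated, the argument is the standard \emph{higher-order term beaten by linear term} computation from classical Armijo line-search analysis, and no further technicality arises.
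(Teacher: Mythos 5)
Your proof is correct and follows essentially the same route as the paper: apply Theorem \ref{thm_sensitivity} on $B_t$, use the measure clause of \eqref{eq_cond_Bk} to make the remainder $O(t^{1+\gamma})$ and the integral clause to get the linear-in-$t$ negative bound, and absorb the remainder into the $(1-\sigma)$ margin for small $t$. The only difference is cosmetic — you track the constants explicitly where the paper writes $o(t)$ — and your remark that $\tilde u\in L^\infty(\Omega)$ is needed to invoke Theorem \ref{thm_sensitivity} is a fair observation that the paper leaves implicit.
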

\begin{proof}
Due to Theorem \ref{thm_sensitivity}, we have
\begin{multline*}
 J(y_t, u_t) - J(y_k,u_k) - \sigma  \int_{B_t} (\tilde u - u_k)p_k + g(\tilde u) - g(u_k)\dx\\
 \le (1-\sigma)   \int_{B_t} (\tilde u - u_k)p_k + g(\tilde u) - g(u_k)\dx + o(t)\\
 \le t  (1-\sigma) \int_\Omega (\tilde u - u_k)p_k + g(\tilde u) - g(u_k)\dx + o(t),
 \end{multline*}
which proves the claim.
\end{proof}

The resulting algorithm is sketched in Algorithm \ref{alg_top}.

\begin{algorithm}[htbp]
\begin{algorithmic}
 \State Choose $\beta\in (0,1)$, $\sigma\in(0,1)$, $u_0$ with $\int_\Omega g(u_0)\dx<\infty$,  $\dtol\ge0$. Set $k:=0$.

 \Loop \Comment{Gradient descent}

 \State Compute state $y_k$ and adjoint $p_k$ associated to $u_k$.

 \State Compute $\tilde u_k$ as in \eqref{eq_def_tilde_uk}.
 \If{$\left| \int_\Omega (\tilde u_k - u_k)p_k + g(\tilde u) - g(u_k)\dx \right|\le \dtol$}  \Comment{Termination criterion}
	\State  \textbf{return} $u_k$
 \EndIf

 \State $t:=1$.

 \Loop \Comment{Armijo line-search}
 \State Compute $B_{k,t}$ satisfying \eqref{eq_cond_Bk}.
 \State Compute $J(y_t,u_t)$.
 \If{ \eqref{eq_cond_tk} is satisfied } \State \textbf{break}\EndIf
 \State $t:= \beta \cdot t$.

 \EndLoop
 \State $t_k := t$.  \Comment{Update}
 \State $u_{k+1} := u_k + \chi_{B_{k,t_k}} ( \tilde u_k - u_k)$.
\State $k:=k+1$.
\EndLoop
\end{algorithmic}
\caption{Maximum-principle based descent algorithm}
\label{alg_top}
\end{algorithm}

Let us now turn to the convergence analysis of Algorithm \ref{alg_top}.
Here, we follow the related analysis in \cite{DWachsmuth2024}.
Let us define
\[
 \rho_k := \int_\Omega (\tilde u_k - u_k)p_k + g(\tilde u) - g(u_k)\dx.
\]
Due to the choice of $\tilde u_k$ in \eqref{eq_def_tilde_uk}, it follows $\rho_k \le0$.
If $\rho_k=0$ then $u_k$ satisfies the maximum principle Theorem \ref{thm_pmp}, and the corresponding control $u_k$ is optimal by Corollary \ref{cor_pmp_suff}.

\begin{lemma} \label{lem_conv_aux1}
 Let $(u_k)$ be an infinite sequence generated by Algorithm \ref{alg_top}. Then
  \[
  \sum_{k=0}^\infty t_k \|\rho_k\|_{L^1(\Omega)} < +\infty.
  \]
\end{lemma}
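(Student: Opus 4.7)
The plan is to combine the Armijo-type descent inequality with the auxiliary set condition \eqref{eq_cond_Bk} to produce a telescoping estimate, and then invoke the boundedness below of $J$.

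First I would note that because $\tilde u_k$ is the pointwise argmin in \eqref{eq_def_tilde_uk}, the integrand defining $\rho_k$ is nonpositive almost everywhere, hence $\rho_k \le 0$, and consequently $|\rho_k|$ agrees with $-\rho_k$; so (up to the constant factor $|\Omega|$) $\|\rho_k\|_{L^1(\Omega)}$ is just $|\rho_k|$. Next, by the accepted Armijo step \eqref{eq_cond_tk} together with the second-term estimate in \eqref{eq_cond_Bk}, and using that the right-hand side of \eqref{eq_cond_tk} is nonpositive (so the inequality is preserved when replacing the $B_{t_k}$-integral by $t_k \rho_k$), I obtain
\[
J(y_{k+1},u_{k+1}) - J(y_k,u_k) \le \sigma\,t_k\,\rho_k \le 0.
\]
Rearranging and using $-\rho_k = |\rho_k|$ gives
\[
\sigma\,t_k\,|\rho_k| \le J(y_k,u_k) - J(y_{k+1},u_{k+1}).
\]

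Then I would sum this inequality from $k=0$ to $k=N$ and telescope the right-hand side:
\[
\sigma\sum_{k=0}^N t_k\,|\rho_k| \le J(y_0,u_0) - J(y_{N+1},u_{N+1}).
\]
Since $J(y,u) = \tfrac12\|y-y_d\|_{L^2(\Omega)}^2 + \int_\Omega g(u)\dx$ and $g$ is bounded from below by assumption (as recorded in the introduction), $J$ is bounded from below by some constant $m \in \R$. Therefore
\[
\sigma\sum_{k=0}^N t_k\,|\rho_k| \le J(y_0,u_0) - m
\]
uniformly in $N$, and letting $N\to\infty$ yields the claim after dividing by $\sigma>0$ (and multiplying by $|\Omega|$ if one interprets $\|\rho_k\|_{L^1(\Omega)}$ literally).

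The argument is essentially bookkeeping; the only slightly delicate point is the first step, where one must be careful with signs: the Armijo test \eqref{eq_cond_tk} uses the $B_t$-integral, not $\rho_k$ itself, and one needs the first inequality in \eqref{eq_cond_Bk} together with the nonpositivity of $\rho_k$ to pass from $\sigma \int_{B_{t_k}}(\cdot)\dx$ to $\sigma t_k \rho_k$ without flipping the inequality. No real obstacle beyond that — the proof is a standard descent-lemma argument adapted from \cite{DWachsmuth2024}.
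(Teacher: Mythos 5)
Your proof is correct and follows essentially the same route as the paper: combine the Armijo condition \eqref{eq_cond_tk} with the first inequality of \eqref{eq_cond_Bk} to bound the per-step decrease of $J$ from below by a multiple of $t_k|\rho_k|$, telescope, and use that $J$ is bounded below because $g$ is. You are in fact slightly more careful than the paper, which silently drops the factor $\sigma$ in the chain $\sigma\int_{B_{t_k}}(\cdot)\dx \le t_k\rho_k$ (from \eqref{eq_cond_Bk} one only gets $\sigma\int_{B_{t_k}}(\cdot)\dx \le \sigma t_k\rho_k$, since $\rho_k\le 0$); retaining $\sigma$ and dividing by it at the end, as you do, is the clean way to finish.
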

\begin{proof}
 Using conditions \eqref{eq_cond_tk} and \eqref{eq_cond_Bk} shows
 \[
  J(y_{k+1}, u_{k+1}) - J(y_k,u_k) \le \sigma  \int_{B_{t_k}} (\tilde u - u_k)p_k + g(\tilde u) - g(u_k)\dx \le t_k  \int_\Omega (\tilde u - u_k)p_k + g(\tilde u) - g(u_k)\dx
  = - t_k \|\rho_k\|_{L^1(\Omega)}.
\]
Due to \eqref{eq_ass_g}, $g$ has a global minimum and is bounded from below, so that $J$ is bounded from below by some $M\in \R$.
Summing this inequality over $k\in \N$ and using $J \ge M$ proves $\sum_{k=1}^\infty t_k \|\rho_k\|_{L^1(\Omega)} \le J(y_0,u_0) -M< \infty$.
\end{proof}

For simplicity, we assume for the subsequent convergence analysis that
\begin{equation} \label{eq_ass_g_compact}
 \operatorname{dom} g := \{v: \ g(v) < \infty\}
\end{equation}
is compact. Then the set of iterates $(u_k)$ and $(\tilde u_k)$ is uniformly bounded in $L^\infty(\Omega)$.

\begin{corollary} \label{cor_uk_bounded}
Assume \eqref{eq_ass_g_compact}.
 Let $M>0$ such that $  \operatorname{dom} g  \subset [-M,+M]$. Then $\|u_k\|_{L^\infty(\Omega)} \le M$ and $\|\tilde u_k\|_{L^\infty(\Omega)} \le M$ for all $k$.
\end{corollary}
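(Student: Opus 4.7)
The plan is to treat the two bounds separately, dealing with $\tilde u_k$ directly from its defining variational principle and with $u_k$ by induction on $k$.

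First, I would handle $\tilde u_k$. By the definition \eqref{eq_def_tilde_uk}, for almost every $x \in \Omega$ we have $\tilde u_k(x) \in \argmin_{v\in\R}\bigl(v p_k(x) + g(v)\bigr)$. Since the minimum value is finite (this is part of the selection being well-defined, using assumption \eqref{eq_ass_g}), any minimizer $v^\ast$ must satisfy $g(v^\ast) < +\infty$, i.e.\ $v^\ast \in \dom g \subset [-M,+M]$. Hence $|\tilde u_k(x)| \le M$ a.e., which gives $\|\tilde u_k\|_{L^\infty(\Omega)} \le M$.

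Next, I would prove $\|u_k\|_{L^\infty(\Omega)} \le M$ by induction on $k$. For the base case, the initial iterate $u_0$ is chosen with $\int_\Omega g(u_0)\dx < \infty$, which forces $u_0(x) \in \dom g$ for a.e.\ $x$, hence $|u_0(x)| \le M$ a.e. For the induction step, suppose $\|u_k\|_{L^\infty(\Omega)} \le M$. The update rule gives
\[
u_{k+1}(x) = u_k(x) + \chi_{B_{k,t_k}}(x)\bigl(\tilde u_k(x) - u_k(x)\bigr) = \begin{cases} \tilde u_k(x) & \text{if } x \in B_{k,t_k},\\ u_k(x) & \text{otherwise.}\end{cases}
\]
Pointwise, $u_{k+1}(x)$ equals either $u_k(x)$ or $\tilde u_k(x)$, both of which are bounded by $M$ in absolute value a.e., so $\|u_{k+1}\|_{L^\infty(\Omega)} \le M$.

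There is no substantial obstacle here; the only point worth noting is the observation that the update is a pointwise selection rather than a convex combination, so no convexity of $\dom g$ or $[-M,M]$ beyond the trivial boundedness is needed, and the bound $M$ from $\dom g$ transfers verbatim. The result follows by combining both bounds for all $k$.
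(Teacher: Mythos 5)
Your proof is correct and is exactly the argument the paper leaves implicit (the corollary is stated without proof): minimizers of $v\mapsto vp_k(x)+g(v)$ must lie in $\dom g\subset[-M,M]$, and the update is a pointwise selection between $u_k$ and $\tilde u_k$, so the bound propagates by induction from $u_0$. Your explicit observation that no convexity is needed because the update is a selection rather than a convex combination is a nice touch, but the substance matches the intended reasoning.
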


\begin{theorem} Assume \eqref{eq_ass_g_compact}.
 Either the Algorithm \ref{alg_top}  stops after finitely many steps
 with
 \[
 \left| \int_\Omega (\tilde u_k - u_k)p_k + g(\tilde u) - g(u_k)\dx \right|\le \dtol
 \]
 (so that $u_k$ satisfies the maximum principle if $\dtol=0$), or
 \[
  \int_\Omega (\tilde u - u_k)p_k + g(\tilde u) - g(u_k)\dx \to 0,
 \]
 i.e., the residual in the maximum principle tends to zero,
and $(u_k)$ is a minimizing sequence.
\end{theorem}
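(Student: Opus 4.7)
The first alternative, finite termination with residual bounded by $\dtol$, follows immediately from the termination test in Algorithm \ref{alg_top}. Assume henceforth that the algorithm produces an infinite sequence $(u_k)$. The task is then to show (i) $\rho_k \to 0$, where $\rho_k := \int_\Omega (\tilde u_k - u_k) p_k + g(\tilde u_k) - g(u_k)\dx \le 0$, and (ii) that this forces $(u_k)$ to be a minimizing sequence.

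For (i), the starting point is Lemma \ref{lem_conv_aux1}, which gives $\sum_k t_k |\rho_k| < \infty$. I argue by contradiction: if $\rho_k \not\to 0$, extract a subsequence $(k_j)$ with $|\rho_{k_j}| \ge \varepsilon > 0$; summability then forces $t_{k_j} \to 0$. For $j$ sufficiently large, $t_{k_j} < 1$, so the Armijo condition \eqref{eq_cond_tk} was violated at $\tau := t_{k_j}/\beta$. Combining this violation with the sensitivity estimate of Theorem \ref{thm_sensitivity}, and then using both lines of \eqref{eq_cond_Bk} (the integral over $B_\tau$ is at most $\tau \rho_{k_j} = -\tau |\rho_{k_j}|$, and $|B_\tau| \le \tau |\Omega|$) together with the uniform bound $\|\tilde u_{k_j}-u_{k_j}\|_{L^\infty(\Omega)} \le 2M$ from Corollary \ref{cor_uk_bounded}, I obtain an estimate of the form
\[
 (1-\sigma)\,\tau\,|\rho_{k_j}| \le c\,(2M)^2\,(\tau|\Omega|)^{1+\gamma},
\]
i.e.\ $|\rho_{k_j}| \le C\,\tau^\gamma \to 0$, contradicting $|\rho_{k_j}| \ge \varepsilon$. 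Hence $\rho_k \to 0$.

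For (ii), let $\tilde u \in L^2(\Omega)$ be any admissible control with state $\tilde y$. Applying Lemma \ref{lem_21} with $B = \Omega$ and $u := u_k$, and invoking the pointwise minimization property \eqref{eq_def_tilde_uk} (which yields $(\tilde u_k - u_k) p_k + g(\tilde u_k) - g(u_k) \le (\tilde u - u_k) p_k + g(\tilde u) - g(u_k)$ a.e., hence $\rho_k \le \int_\Omega (\tilde u - u_k) p_k + g(\tilde u) - g(u_k)\dx$), gives
\[
 J(\tilde y, \tilde u) - J(y_k, u_k) \ge \rho_k + \tfrac12 \|\tilde y - y_k\|_{L^2(\Omega)}^2 \ge \rho_k.
\]
The Armijo rule makes $(J(y_k, u_k))$ monotone decreasing, and it is bounded below (as exploited in Lemma \ref{lem_conv_aux1}), so it converges to some $J^*$. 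Passing to the limit with $\rho_k \to 0$ yields $J^* \le J(\tilde y, \tilde u)$, and arbitrariness of $\tilde u$ forces $J^* = \inf J$. The main obstacle is the inequality chain in part (i): carefully exploiting the gap between the linear descent built into the $B_\tau$-selection and the $(1+\gamma)$-order remainder in Theorem \ref{thm_sensitivity} to convert the Armijo failure into a vanishing upper bound on $|\rho_{k_j}|$; the uniform $L^\infty$ control from Corollary \ref{cor_uk_bounded} is indispensable here, since that remainder scales with $\|\tilde u_k - u_k\|_{L^\infty(\Omega)}^2$.
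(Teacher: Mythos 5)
Your proof is correct and its core coincides with the paper's: both extract, from the failure of the Armijo condition \eqref{eq_cond_tk} at the previous trial step $\tau=t_k/\beta$, combined with Theorem \ref{thm_sensitivity}, both lines of \eqref{eq_cond_Bk}, and the uniform $L^\infty$ bound of Corollary \ref{cor_uk_bounded}, the key estimate $|\rho_k|\le C\,t_k^\gamma$ whenever $t_k<1$. The only difference in the convergence argument is how this is combined with the summability from Lemma \ref{lem_conv_aux1}: you argue by contradiction along a subsequence with $|\rho_{k_j}|\ge\varepsilon$ (forcing $t_{k_j}\to0$ and hence $|\rho_{k_j}|\le C t_{k_j}^\gamma\to0$), whereas the paper splits $\sum_k t_k|\rho_k|$ into the indices with $t_k=1$ and $t_k<1$ and bounds the second partial sum below by $c\sum|\rho_k|^{1+1/\gamma}$; the two routes are interchangeable and equally rigorous. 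One genuine addition on your side: you actually prove the assertion that $(u_k)$ is a minimizing sequence, via Lemma \ref{lem_21} with $B=\Omega$ and the pointwise optimality of $\tilde u_k$ from \eqref{eq_def_tilde_uk}, which give $J(y_k,u_k)\le J(\tilde y,\tilde u)-\rho_k$ for every admissible $\tilde u$ and hence $\lim_k J(y_k,u_k)=\inf J$ once $\rho_k\to0$. The paper states this conclusion in the theorem but its proof does not carry out this step, so your argument fills a genuine (if small) gap. A minor cosmetic point: your remainder term correctly carries $\|\tilde u_k-u_k\|_{L^\infty(\Omega)}^2$ as in Theorem \ref{thm_sensitivity}, where the paper's proof drops the square; this is immaterial after Corollary \ref{cor_uk_bounded} absorbs the factor into the constant.
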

\begin{proof}
We follow the proof of the related result \cite[Theorem 6.7]{DWachsmuth2024}.
Let us assume the algorithm generates an infinite sequence of iterates.
 Let $k$ be such that $t_k < 1$. Due to the line-search procedure of Algorithm \ref{alg_top},
 it follows that $t:=\beta^{-1} t_k \le 1$ violates the descent condition \eqref{eq_cond_tk}, that is
 \[
 0<  J(y_t, u_t) - J(y_k,u_k) - \sigma  \int_{B_t} (\tilde u - u_k)p_k + g(\tilde u) - g(u_k)\dx.
 \]
As in the proof of Lemma \ref{lem_line_search_welldefined}, we get from Theorem \ref{thm_sensitivity}
 \[
 0<  t  (1-\sigma) \int_\Omega (\tilde u - u_k)p_k + g(\tilde u) - g(u_k)\dx + c\, |t|^{1+\gamma}\| \tilde u-u\|_{L^\infty(\Omega)}.
 \]
 Together with Corollary \ref{cor_uk_bounded}, we get
 \[
  0 < - t (1-\sigma) \|\rho_k\|_{L^1(\Omega)} + c |t|^{1+\gamma},
 \]
where $c$ is independent of $k$. This implies
\[
 \|\rho_k\|_{L^1(\Omega)} \le c t_k^\gamma
\]
for all $k$ such that $t_k<1$.
With Lemma \ref{lem_conv_aux1}, we get
\[
  +\infty > \sum_{k=0}^\infty t_k \|\rho_k\|_{L^1(\Omega)}
  = \left( \sum_{k:\, t_k=1} \|\rho_k\|_{L^1(\Omega)} \right) + \left( \sum_{k:\, t_k<1} t_k\|\rho_k\|_{L^1(\Omega)} \right)
   \ge \left( \sum_{k:\, t_k=1} \|\rho_k\|_{L^1(\Omega)} \right) + c \left( \sum_{k:\, t_k<1} \|\rho_k\|_{L^1(\Omega)}^{1+\frac1\gamma} \right),
\]
which results in $\lim_{k\to\infty} \|\rho_k\|_{L^1(\Omega)} = 0$.
Hence, the algorithm stops after finitely many iterations if $\dtol>0$.
\end{proof}

\section{Numerical results}
Let us now report about numerical experiments with Algorithm \ref{alg_top}.
Here, we consider the optimal control problem
\[
 J(y,u) := \frac12\|y-y_d\|_{L^2(\Omega)}^2 + \frac \alpha 2 \|u\|_{L^2(\Omega)}^2 + I_{\mathbb Z\cap [-b,b]}(u)
\]
over all $u\in L^2(\Omega)$ and $y\in H^1_0(\Omega)$ satisfying
\[\begin{aligned}
 -\Delta y &= u &&\text{ in } \Omega,\\
 y &= 0 &&\text{ on } \partial\Omega.
\end{aligned}\]
This fits into the setting of the paper with the choice
\[
 g(v) := \frac\alpha2 v^2 + I_{ \Z \cap [-b,b] }
\]
Here, we chose $\Omega=(0,1)^2$,
\[
 y_d(x_1,x_2) = 10 x_1 \sin(5x_1) \cos(7x_2), \quad \alpha = 0.01,\quad  \beta=0.01,\quad b = 10.
\]
We discretized the problem with piecewise linear finite elements on a regular mesh for state and adjoint variables, while
the control was discretized with piecewise constant finite elements.
We report the results for a sequence of different meshes, where the finest mesh has mesh-size $h=1.41 \cdot 10^{-3}$ resulting in
$\approx2 \cdot 10^6$ degrees of freedom for the control variables, which results in a mixed-integer optimization problem with $\approx2 \cdot 10^6$ integer variables.
In the implementation of Algorithm \ref{alg_top} a greedy strategy was used to determine $B_t$.
The loop in Algorithm \ref{alg_top} was terminated if in the inner loop $t|\Omega|$ was smaller than any of the elements in the grid.

Now let us report about some of the results.
The optimal control can be seen in the left plot of Figure \ref{fig1}.
In the right plot, we report about the iteration history of the residual $\|\rho_k\|_{L^1(\Omega)}$.
Surprisingly, the iterations seem to be mesh independent.
In addition, for this particular problem a very small number of iterations was needed to optimize over $2 \cdot 10^6$ discrete control variables.

\begin{figure}[h!]
 \includegraphics[width=0.48\textwidth]{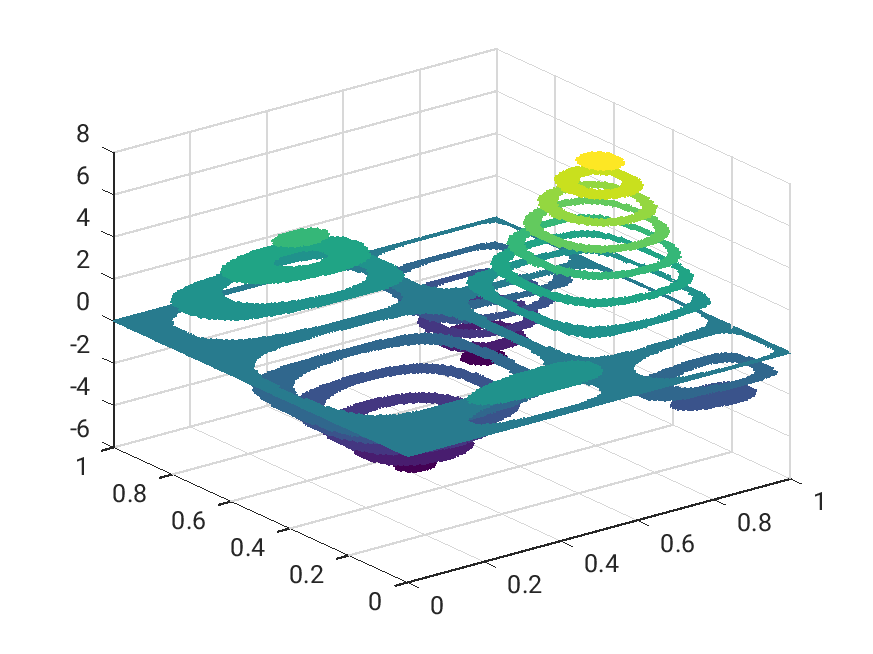}
 \includegraphics[width=0.48\textwidth]{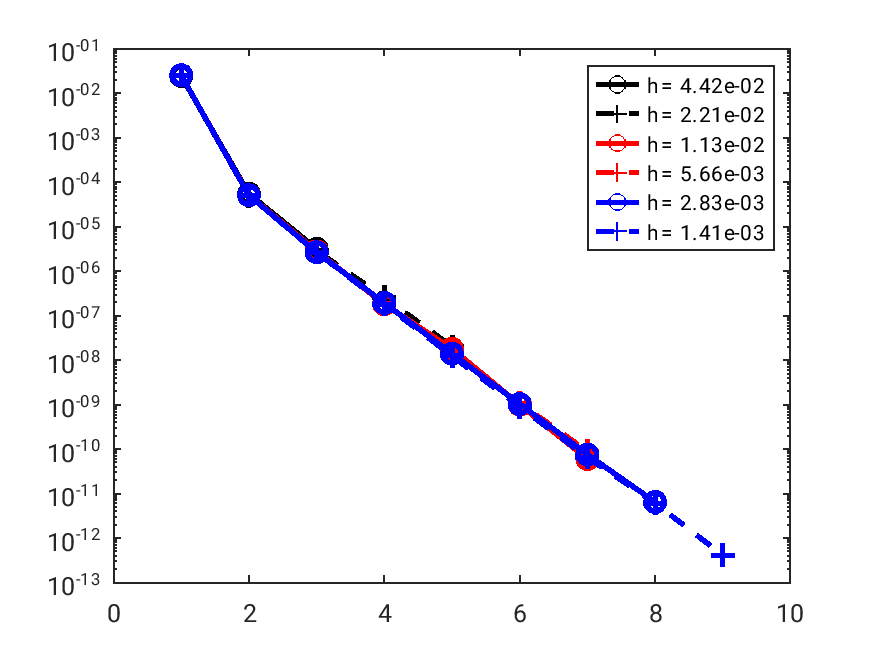}

  \caption{Optimal control (left), iteration history (right)}
  \label{fig1}
\end{figure}

This is underlined by the results in Table \ref{table1}. It shows for different discretizations the final value of the objective $J$ and the final value
of the residual $\|\rho\|_{L^1(\Omega)}$. As can be seen from the last column of this table, very few outer iterations are needed.
In conclusion, this new algorithm seems to be capable of solving quite challenging mixed-inter programs.

\begin{table}[h!]
\sisetup{table-alignment-mode = format, table-format = 2.2e2, table-number-alignment = left}
\begin{center}
\begin{tabular}{S[table-format = 1.2e2]S[table-format = 1.3]S[table-format=1.2e2]S[table-format=2] }
\toprule
$h$ & $J$ & $\|\rho\|_{L^1(\Omega)}$ & It\\
\midrule
4.42e-02 & 4.706 & 3.20e-06 & 4 \\
 2.21e-02 & 5.048 & 2.02e-08 & 6 \\
 1.13e-02 & 5.210 & 6.00e-11 & 8 \\
 5.66e-03 & 5.293 & 8.91e-11 & 8 \\
 2.83e-03 & 5.334 & 6.46e-12 & 9 \\
 1.41e-03 & 5.354 & 4.11e-13 & 10 \\
\bottomrule
\end{tabular}
\end{center}
\caption{Iteration history}
  \label{table1}
\end{table}

\section*{Acknowledgements}

This research was supported by the German Research Foundation (DFG) under grant number WA 3626/3-2
within the priority program ``Non-smooth and Complementarity-based Distributed Parameter
Systems: Simulation and Hierarchical Optimization'' (SPP 1962).
The author thanks Anna Lentz for comments on an earlier version of this manuscript.


\end{document}